\newcommand{\infconv}{\ensuremath{\mbox{\small$\,\square\,$}}}
\newcommand{\scal}[2]{\langle{{#1},{#2}}\rangle}
\newcommand{\RR}{\ensuremath{\mathbb R}}
\newcommand{\RX}{\ensuremath{\,\left]-\infty,+\infty\right]}}
\newcommand{\thalb}{\ensuremath{\tfrac{1}{2}}}
\newcommand{\zu}{\ensuremath{{\left]0,1\right[}}}
\newcommand{\moro}[2]{\ensuremath{e_{#1}\,{#2}}}
\newcommand{\goe}[2]{\ensuremath{G_{#1}\,{#2}}}
\newcommand{\shawn}[2]{\ensuremath{S_{#1}\,{#2}}}
\newcommand{\pav}[4]{\ensuremath{\operatorname{pav}\left({#1},{#2}\,;{#3},{#4}\right)}}
\newcommand{\Id}{\ensuremath{\operatorname{Id}}}
\newcommand{\pinf}{\ensuremath{+\infty}}
\renewcommand{\phi}{\ensuremath{\varphi}}
\newcommand{\qq}{\ensuremath{\mathfrak{q}}}
\newtheorem{theorem}{Theorem}[section]
\newtheorem{definition}[theorem]{Definition}
\theoremstyle{plain}{\theorembodyfont{\rmfamily}
}
\theoremstyle{plain}{\theorembodyfont{\rmfamily}
}
\theoremstyle{plain}{\theorembodyfont{\rmfamily}
}
\theoremstyle{plain}{\theorembodyfont{\rmfamily}
\newtheorem{example}[theorem]{Example}}
\theoremstyle{plain}{\theorembodyfont{\rmfamily}
\newtheorem{remark}[theorem]{Remark}}
\theoremstyle{plain}{\theorembodyfont{\rmfamily}
}
\begin{document}

\title{{\sffamily Self-dual Smooth Approximations of Convex Functions\\
via the Proximal Average}}

\author{
Heinz H.\ Bauschke\thanks{Mathematics, Irving K.\ Barber School,
UBC Okanagan, Kelowna, British Columbia V1V 1V7, Canada. E-mail:
\texttt{heinz.bauschke@ubc.ca}.}, ~ Sarah M.\
Moffat\thanks{Mathematics, Irving K.\ Barber School, UBC Okanagan,
Kelowna, British Columbia V1V 1V7, Canada.
E-mail:  \texttt{smoffat99@gmail.com}.},~ and ~  Xianfu
Wang\thanks{Mathematics, Irving K.\ Barber School, UBC Okanagan,
Kelowna, British Columbia V1V 1V7, Canada. E-mail:
\texttt{shawn.wang@ubc.ca}.} }

\date{
March 30, 2010 
}

\maketitle

% \vskip 8mm

\begin{abstract} \noindent
The proximal average of two convex functions has proven to be a useful tool
in convex analysis. In this note, we express Goebel's
self-dual smoothing operator in terms of the proximal average, which allows
us to give a simple proof of self duality. 
We also provide a novel self-dual smoothing operator.
Both operators are illustrated by smoothing the norm. 
\end{abstract}

\noindent {\bfseries 2010 Mathematics Subject Classification:}
Primary 26B25; Secondary 26B05, 65D10, 90C25.

\noindent {\bfseries Keywords:} 
approximation,
convex function,
Fenchel conjugate,
Goebel's smoothing operator, 
Moreau envelope,
proximal average.

\section{Introduction}

Let $X$ be the standard Euclidean space $\RR^n$, 
with inner product $\scal{\cdot}{\cdot}$ and induced norm $\|\cdot\|$.
It will be convenient to set
\begin{equation}
\qq = \thalb\|\cdot\|^2.
\end{equation}
Now let $f\colon X\to\RX$ be convex, lower semicontinuous, and proper.
Since many convex functions are nonsmooth, it is natural to ask:
How can one approximate $f$ with a smooth function?

The most famous and very useful answer to this question is provided
by the \emph{Moreau envelope} \cite{Moreau,RockWets},
which, for $\lambda>0$, is defined by\,\footnote{The symbol ``$\infconv$''
denotes \emph{infimal convolution}: $(f_1\infconv f_2)(x) = \inf_{y}
\big(f_1(y) + f_2(x-y)\big)$.}
\begin{equation}
\moro{\lambda}{f} = f\infconv \lambda^{-1}\qq.
\end{equation}
It is well known that $\moro{\lambda}{f}$ is smooth and that
$\lim_{\lambda\to 0^+}\moro{\lambda}{f}=f$ point-wise;
see, e.g., \cite[Theorem~1.25 and Theorem~2.26]{RockWets}. 

Let us consider the norm,
which is nonsmooth at the origin. 

\begin{example}[Moreau envelope of the norm]
\label{ex:moronorm}
Let $\lambda\in\zu$, set $f=\|\cdot\|$, and denote
the closed unit ball by $C$. 
Then, for $x$ and $x^*$ in $X$, we have\,\footnote{
Here $\iota_{C}(x)=0$, if $x\in C$; $\iota_C(x)=\pinf$, if $x\notin C$
is the \emph{indicator function}, 
$f^*(x^*) = \sup_{x\in X}\big(\scal{x}{x^*}-f(x)\big)$ is
the \emph{Fenchel conjugate} of $f$, and
$d_C = \|\cdot\|\infconv \iota_C$ is the \emph{distance function}.}
\begin{equation}
\label{e:moronorm}
\moro{\lambda}{f}(x) = 
\begin{cases}
\displaystyle \frac{\|x\|^2}{2\lambda}, &\text{if $\|x\|\leq
\lambda$;}\\[+4mm]
\displaystyle \|x\|-\frac{\lambda}{2}, &\text{if $\|x\|>\lambda$,}
\end{cases}
\end{equation}
$\big(\moro{\lambda}{f}\big)^* = \iota_C + \lambda\qq$, and
$\moro{\lambda}{(f^*)}(x^*) = 
(2\lambda)^{-1}\cdot\big(\max\{0,\|x^*\|-1\}\big)^2$. 
Consequently, 
$\big(\moro{\lambda}{f}\big)^* \neq \moro{\lambda}{(f^*)}$. 
\end{example}
\begin{proof}
Either a straight-forward computation
or \cite[Example~11.26(a)]{RockWets} yields
\begin{equation}
f^* = \iota_C.
\end{equation}
Next, if $y\in X$, then
\begin{align}
\moro{1/\lambda}{\iota_C}(y) &= \inf_{c\in C} 
\lambda\qq(y-c)\\[+4mm]
&= \frac{\lambda}{2} d_C^2(y)\\[+4mm]
&=\frac{\lambda}{2}\cdot
\begin{cases}
\big(\|y\|-1\big)^2, &\text{if
$\|y\|>1$;}\\[+4mm]
0, &\text{if $\|y\|\leq 1$,}
\end{cases}%\\[+4mm]
%&=\frac{\lambda}{2}\Big( \max\big\{\|y\|-1,0\big\}\Big)^2
\end{align}
and thus
\begin{equation}
\moro{1/\lambda}{\iota_C}\big(x/\lambda\big)  = 
\frac{\lambda}{2}\cdot\begin{cases}
\big(\|x/\lambda\|-1\big)^2, &\text{if
$\|x\|>\lambda$;}\\[+4mm]
0, &\text{if $\|x\|\leq \lambda$.}
\end{cases}%\\[+4mm]
\end{equation}
By \cite[Example~11.26(b) on page~495]{RockWets}, we obtain
\begin{align}
\moro{\lambda}{f}(x) &= \frac{1}{\lambda}\qq(x) -
\moro{1/\lambda}{f^*}\big(x/\lambda\big)\\
&=\frac{1}{2\lambda}\|x\|^2 - \frac{\lambda}{2}\cdot
\begin{cases}
\displaystyle \frac{\|x\|^2}{\lambda^2} - \frac{2\|x\|}{\lambda} + 1,
&\text{if $\|x\|>\lambda$;}\\[+4mm]
0, &\text{if $\|x\|\leq \lambda$}
\end{cases}\\[+4mm]
&= \begin{cases}
\displaystyle \|x\|-\frac{\lambda}{2}, &\text{if $\|x\|>\lambda$;}\\[+4mm]
\displaystyle\frac{\|x\|^2}{2\lambda}, &\text{if $\|x\|\leq \lambda$}
\end{cases}
\end{align}
and $\big(\moro{\lambda}{f}\big)^* = f^* + \lambda\qq = 
\iota_C + \lambda\qq$.
Alternatively, one may use \cite[Example~2.16]{CW}, which provides the
proximal mapping of $f$, 
and then use the proximal mapping calculus to obtain these results. 
\end{proof}

While the Moreau envelope has many desirable properties, we see from
Example~\ref{ex:moronorm} that the smooth approximation $\moro{\lambda}{f}$
is not \emph{self-dual} in the sense that
\begin{equation}
\big(\moro{\lambda}{f}\big)^* \neq \moro{\lambda}{(f^*)}.
\end{equation}

It is perhaps surprising that self-dual smoothing operators even exist.
The first example appears in \cite{Rafal}. Specifically, Goebel
defined
\begin{equation}
\goe{\lambda}{f} = (1-\lambda^2)\moro{\lambda}{f} + \lambda\qq
\end{equation}
and proved that
\begin{equation}
\big(\goe{\lambda}{f}\big)^* = \goe{\lambda}{(f^*)},
\end{equation}
i.e., \emph{Fenchel conjugation and Goebel smoothing commute!}
For applications of his smoothing operator, see \cite{Rafal}. 

\noindent
\emph{The purpose of this note is two-fold.
First, we present a different representation of the Goebel smoothing
operator which allows us to prove self-duality using the Fenchel
conjugation formula for the proximal average. 
Secondly, the proximal average is also
utilized to obtain a novel smoothing operator.
Both smoothing operators are computed explicitly for the norm.
The formulas derived show that the new smoothing operator is distinct
from the one provided by Goebel.
}

For $f_1$ and $f_2$, two functions from $X$ to $\RX$ that 
are convex, lower semicontinuous and proper, 
and for two strictly positive convex coefficients
($\lambda_1+\lambda_2=1$), 
the \emph{proximal average} is defined by 
\begin{equation}
\label{e:pavdef}
\pav{f_1}{f_2}{\lambda_1}{\lambda_2} =  \big(\lambda_1(f_1+\qq)^* +
\lambda_2(f_2+\qq)^*\big)^* - \qq.
\end{equation}
See \cite{PA,BLT,BLW,BMR,BW,Rafal,Rafal2} 
for further information and applications of the proximal average.
The key property
is the \emph{Fenchel conjugation formula}
\begin{equation}
\label{e:pavconj}
\pav{f_1}{f_2}{\lambda_1}{\lambda_2}^* = 
\pav{f_1^*}{f_2^*}{\lambda_1}{\lambda_2};
\end{equation}
see \cite[Theorem~6.1]{BMR}, \cite[Theorem~4.3]{BLT}, or
\cite[Theorem~5.1]{PA}. 

We use standard convex analysis calculus and notation as, 
e.g., in \cite{Rocky,RockWets,Zali}. 
In Section~\ref{sec:Rafal}, we consider Goebel's smoothing operator
from the proximal-average view point.
The new smoothing operator is presented in Section~\ref{sec:Shawn}.

\section{The Goebel smoothing operator}

\label{sec:Rafal}

\begin{definition}[Goebel smoothing operator]
Let $f\colon X\to\RX$ be convex, lower semicontinuous and proper,
and let $\lambda \in\zu$.
Then the \emph{Goebel smoothing operator \cite{Rafal}} is defined by
\begin{equation}
\label{e:raf}
\goe{\lambda}{f} = (1-\lambda^2)\moro{\lambda}{f} + \lambda\qq.
\end{equation}
\end{definition}

Note that \eqref{e:raf} and standard properties of the Moreau envelope 
imply that point-wise
\begin{equation}
\lim_{\lambda\to 0^+} \goe{\lambda}{f} = f
\end{equation} and
that each $\goe{\lambda}{f}$ is smooth.

Our first main result provides two alternative descriptions of the Goebel
smoothing operator.
The first description, item~\ref{t:raf:i} in Theorem~\ref{t:raf},
shows a pleasing reformulation in terms of the
proximal average. The second description, 
item~\ref{t:raf:ii} in Theorem~\ref{t:raf} is less appealing but has
the advantage
of providing a simple proof of the \emph{self-duality}
\ref{t:raf:iii} observed by Goebel.

\begin{theorem}
\label{t:raf}
Let $f\colon X\to\RX$ be convex, lower semicontinuous and proper,
and let $\lambda \in\zu$.
Then the following hold\,\footnote{Here $\Id\colon X\to X\colon x\mapsto x$
is the \emph{identity operator}.}.
\begin{enumerate}
\item 
\label{t:raf:i}
$\goe{\lambda}{f} = (1+\lambda)\pav{f}{0}{1-\lambda}{\lambda} +
\lambda\qq$.
\item 
\label{t:raf:ii}
$\goe{\lambda}{f} =
(1+\lambda)^2\pav{f}{\qq}{\frac{1-\lambda}{1+\lambda}}{\frac{2\lambda}{1+\lambda}}\circ
(1+\lambda)^{-1}\Id$. 
\item 
\label{t:raf:iii}
\textbf{\emph{(Goebel)}} 
$\big(\goe{\lambda}{f}\big)^* = \goe{\lambda}{(f^*)}$. 
\end{enumerate}
\end{theorem}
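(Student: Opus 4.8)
The plan is to establish the three items in order, since each builds on the previous one. For item~\ref{t:raf:i}, I would start from the definition \eqref{e:pavdef} of the proximal average with $f_1=f$, $f_2=0$, $\lambda_1=1-\lambda$, $\lambda_2=\lambda$. Observe that $(0+\qq)^*=\qq$, so
\begin{equation}
\pav{f}{0}{1-\lambda}{\lambda}=\big((1-\lambda)(f+\qq)^*+\lambda\qq\big)^*-\qq.
\end{equation}
The inner conjugate is a sum of a closed proper convex function and a multiple of $\qq$; using the standard identities $(g+\mu\qq)^*=g^*\infconv(\mu^{-1}\qq)$ together with $g^{**}=g$ and the scaling rule for infimal convolution, I would massage the outer conjugate into a Moreau envelope of $f$ plus a quadratic term, and check that the resulting expression, multiplied by $1+\lambda$ and shifted by $\lambda\qq$, matches $(1-\lambda^2)\moro{\lambda}{f}+\lambda\qq$. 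This is the routine but slightly fiddly computation; the key facts are $(1-\lambda^2)=(1-\lambda)(1+\lambda)$ and the elementary envelope identity $\moro{\lambda}{f}=\lambda^{-1}\qq\infconv(\text{something})$ — nothing deep, just bookkeeping with conjugates.

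For item~\ref{t:raf:ii}, I would rewrite the proximal average $\pav{f}{0}{1-\lambda}{\lambda}$ appearing in~\ref{t:raf:i} by ``splitting off'' the quadratic: the point is that $0$ and $\qq$ differ by a quadratic, and the proximal average interacts with quadratic perturbations and with dilations in a controlled way. Concretely, I expect an identity of the form $\pav{f}{0}{\lambda_1}{\lambda_2}=\alpha^2\,\pav{f}{\qq}{\mu_1}{\mu_2}\circ(\alpha^{-1}\Id)+(\text{quadratic})$ for suitable $\alpha,\mu_1,\mu_2$ expressed in $\lambda$; one derives it again by expanding both sides through \eqref{e:pavdef}, since $(\qq+\qq)^*=\thalb\qq$ and conjugation/dilation formulas are all explicit. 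Substituting the resulting expression into~\ref{t:raf:i} and collecting the quadratic terms should make them cancel, leaving exactly the formula in~\ref{t:raf:ii}. The main obstacle here is getting the dilation and coefficient bookkeeping exactly right — in particular verifying that the stray $\lambda\qq$ from~\ref{t:raf:i} is precisely absorbed — so I would double-check by evaluating both sides at $0$ and by a homogeneity/degree count.

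For item~\ref{t:raf:iii}, the self-duality, I would use~\ref{t:raf:ii} together with the Fenchel conjugation formula \eqref{e:pavconj} for the proximal average and the self-conjugacy $\qq^*=\qq$. The relevant principle is that conjugation turns $g\circ(c\,\Id)$ into $c^{-2}\,g^*\circ(c\,\Id)$ when applied to $c^2 g\circ(c^{-1}\Id)$ — i.e. the combination $c^2\,g\circ(c^{-1}\Id)$ is ``conjugation-equivariant'' in the sense that its conjugate is $c^2\,g^*\circ(c^{-1}\Id)$. Applying this with $c=1+\lambda$ and $g=\pav{f}{\qq}{\frac{1-\lambda}{1+\lambda}}{\frac{2\lambda}{1+\lambda}}$, and then using \eqref{e:pavconj} to get $g^*=\pav{f^*}{\qq^*}{\frac{1-\lambda}{1+\lambda}}{\frac{2\lambda}{1+\lambda}}=\pav{f^*}{\qq}{\frac{1-\lambda}{1+\lambda}}{\frac{2\lambda}{1+\lambda}}$, one reads off that $\big(\goe{\lambda}{f}\big)^*$ has exactly the form of~\ref{t:raf:ii} with $f$ replaced by $f^*$, i.e. equals $\goe{\lambda}{(f^*)}$. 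I expect~\ref{t:raf:ii} to be the main obstacle of the whole theorem — once the somewhat unappealing formula in~\ref{t:raf:ii} is in hand, self-duality falls out almost mechanically from \eqref{e:pavconj} and $\qq^*=\qq$, which is precisely the payoff advertised in the discussion preceding the theorem.
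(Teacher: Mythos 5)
Your proposal is correct and follows essentially the same route as the paper: expand the proximal average via \eqref{e:pavdef} using $(0+\qq)^*=\qq$ and $(\qq+\qq)^*=\thalb\qq$, reduce to a Moreau-envelope expression by conjugate calculus for \ref{t:raf:i} and \ref{t:raf:ii}, and then obtain \ref{t:raf:iii} from \ref{t:raf:ii} via the scaling rule $(\beta^2 h\circ(\beta^{-1}\Id))^*=\beta^2 h^*\circ(\beta^{-1}\Id)$ together with \eqref{e:pavconj} and $\qq^*=\qq$. The only cosmetic difference is organizational: the paper proves \ref{t:raf:i} and \ref{t:raf:ii} simultaneously by reducing both to a common intermediate expression $(1+\lambda)\big((1-\lambda)(f+\qq)^*+\lambda\qq\big)^*-\qq$, whereas you derive \ref{t:raf:ii} from \ref{t:raf:i}; the underlying computations are the same.
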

\begin{proof}
Let $x\in X$.
Then, using \eqref{e:pavdef} and standard convex calculus, 
we obtain 
\begin{align}
&\quad
\bigg((1+\lambda)^2\pav{f}{\qq}{\frac{1-\lambda}{1+\lambda}}{\frac{2\lambda}{1+\lambda}}\circ
(1+\lambda)^{-1}\Id\bigg)(x)\\[+4mm]
&= (1+\lambda)^2 \bigg(\left(\frac{1-\lambda}{1+\lambda}(f+\qq)^* +
\frac{2\lambda}{1+\lambda}(\qq+\qq)^*\right)^*-\qq\bigg)\left(\frac{x}{1+\lambda}\right)\\[+4mm]
&= (1+\lambda)^2 \left(\frac{1-\lambda}{1+\lambda}\big(f+\qq\big)^* +
\frac{\lambda}{1+\lambda}\qq\right)^*\left(\frac{x}{1+\lambda}\right)
-\qq(x)\\[+4mm]
&= (1+\lambda) \Big((1-\lambda)\big(f+\qq\big)^* +
\lambda\qq\Big)^*\left({x}\right) -\qq(x)\label{e:0326:a}\\[+4mm]
&= (1+\lambda) \bigg(\Big((1-\lambda)\big(f+\qq\big)^* +
\lambda\big(0+\qq\big)^*\Big)^*-\qq\bigg)(x) +\lambda\qq(x)\\[+4mm]
&=\Big((1+\lambda)\pav{f}{0}{1-\lambda}{\lambda}+\lambda\qq\Big)(x).
\end{align}
We have verified that \eqref{e:0326:a} as well as 
the right sides of \ref{t:raf:i} and \ref{t:raf:ii}
coincide. 
Starting from \eqref{e:0326:a} and again applying standard convex caluclus, we see that
\begin{align}
&\quad  (1+\lambda) \Big((1-\lambda)\big(f+\qq\big)^* +
\lambda\qq\Big)^*\left({x}\right) -\qq(x)\\[+4mm]
&=  (1+\lambda) \Big(\Big((1-\lambda)\big(f+\qq\big)^*\Big)^* \infconv
\big(\lambda\qq\big)^*\Big)\left({x}\right) -\qq(x)\\[+4mm]
&=  (1+\lambda) \bigg((1-\lambda)\big(f+\qq\big)\Big(\frac{\cdot}{1-\lambda}\Big) \infconv
\frac{1}{\lambda}\qq\bigg)\left({x}\right) -\qq(x)\\[+4mm]
&=  (1+\lambda)
\inf_{y}\bigg((1-\lambda)\big(f+\qq\big)\Big(\frac{y}{1-\lambda}\Big) +
\frac{1}{\lambda}\qq(x-y)\bigg) -\qq(x)\\[+4mm]
&=  (1+\lambda)
\inf_{y}\bigg((1-\lambda)f\Big(\frac{y}{1-\lambda}\Big) +
(1-\lambda)\qq\Big(\frac{y}{1-\lambda}\Big) +
\frac{1}{\lambda}\qq(x-y)-\frac{1}{1+\lambda}\qq(x)\bigg)\\[+4mm]
&=  (1-\lambda^2)
\inf_{y}\bigg(f\Big(\frac{y}{1-\lambda}\Big) +
\qq\Big(\frac{y}{1-\lambda}\Big) + \frac{1}{\lambda(1-\lambda)}\qq(x-y)
-\frac{1}{1-\lambda^2}\qq(x)\bigg).
\end{align}
Simple algebra shows that for every $y\in X$, 
\begin{equation}
\qq\Big(\frac{y}{1-\lambda}\Big) + 
\frac{1}{\lambda(1-\lambda)}\qq(x-y) -\frac{1}{1-\lambda^2}\qq(x)
= \frac{1}{\lambda}\qq\Big(x-\frac{y}{1-\lambda}\Big) + 
\frac{\lambda}{1-\lambda^2}\qq(x).
\end{equation}
Therefore,
\begin{align}
&\quad  (1+\lambda) \Big((1-\lambda)\big(f+\qq\big)^* +
\lambda\qq\Big)^*\left({x}\right) -\qq(x)\\[+4mm]
&=  (1-\lambda^2)
\inf_{y}\bigg(f\Big(\frac{y}{1-\lambda}\Big) +
\qq\Big(\frac{y}{1-\lambda}\Big) + \frac{1}{\lambda(1-\lambda)}\qq(x-y)
-\frac{1}{1-\lambda^2}\qq(x)\bigg)\\[+4mm]
&=  (1-\lambda^2)
\inf_{y}\bigg(f\Big(\frac{y}{1-\lambda}\Big) +
\frac{1}{\lambda}\qq\Big(x-\frac{y}{1-\lambda}\Big) + 
\frac{\lambda}{1-\lambda^2}\qq(x)
\bigg)\\[+4mm]
&=  (1-\lambda^2)
\inf_{z}\bigg(f(z)+
\frac{1}{\lambda}\qq(x-z)+ 
\frac{\lambda}{1-\lambda^2}q(x)
\bigg)\\[+4mm]
&=  \big((1-\lambda^2)\moro{\lambda}{f} + \lambda\qq\big)(x)\\[+4mm]
&=  \goe{\lambda}{f}(x),
\end{align}
which completes the proof of \ref{t:raf:i} and \ref{t:raf:ii}. 

\ref{t:raf:iii}:
In view of the conjugate formula 
$(\beta^2 h\circ(\beta^{-1}\Id))^* = \beta^2
h^*\circ(\beta^{-1}\Id)$, \ref{t:raf:ii}, and \eqref{e:pavconj}, we obtain
\begin{align}
\big(\goe{\lambda}{f}\big)^* &= \left((1+\lambda)^2\pav{f}{\qq}{\frac{1-\lambda}{1+\lambda}}{\frac{2\lambda}{1+\lambda}}\circ
(1+\lambda)^{-1}\Id\right)^*\\[+4mm]
&=
(1+\lambda)^2\left(\pav{f}{\qq}{\frac{1-\lambda}{1+\lambda}}{\frac{2\lambda}{1+\lambda}}\right)^*\circ
(1+\lambda)^{-1}\Id\\[+4mm]
&=
(1+\lambda)^2\pav{f^*}{\qq^*}{\frac{1-\lambda}{1+\lambda}}{\frac{2\lambda}{1+\lambda}}\circ (1+\lambda)^{-1}\Id\\[+4mm]
&=
(1+\lambda)^2\pav{f^*}{\qq}{\frac{1-\lambda}{1+\lambda}}{\frac{2\lambda}{1+\lambda}}\circ (1+\lambda)^{-1}\Id\\[+4mm]
&= \goe{\lambda}{(f^*)}.
\end{align}
The proof is complete. 
\end{proof}

\begin{remark}
Theorem~\ref{t:raf}\ref{t:raf:i}\&\ref{t:raf:ii} gives
two representations of the Goebel smoothing operator in terms of the
proximal average. Goebel \cite{RafPrivate} discovered a converse formula,
which we state next without proof:
\begin{equation}
\pav{f}{\qq}{\lambda}{1-\lambda} = \frac{(2-\lambda)^2}{4}
\goe{\lambda/(2-\lambda)}{f}\circ \Big(\frac{2}{2-\lambda}\Id\Big). 
\end{equation}
\end{remark}

\begin{example}
\label{ex:rafnorm}
Let $\lambda\in\zu$ and set $f =\|\cdot\|$.
Then, for every $x\in X$,
\begin{equation}
\goe{\lambda}{f}(x) =
\begin{cases}
\displaystyle\frac{\|x\|^2}{2\lambda}, &\text{if
$\|x\|\leq\lambda$;}\\[+4mm]
\displaystyle
\frac{\lambda\|x\|^2}{2} + (1-\lambda^2)\|x\| -
\frac{\lambda(1-\lambda^2)}{2}, &\text{if $\|x\|>\lambda$.}
\end{cases}
\end{equation}
\end{example}
\begin{proof}
Combine \eqref{e:raf} and \eqref{e:moronorm}.
\end{proof}

\section{A new smoothing operator} 

\label{sec:Shawn} 

We now provide a novel smoothing operator that
has a very simple expression in terms of the proximal average.

\begin{definition}[new smoothing operator]
Let $f\colon X\to\RX$ be convex, lower semicontinuous and proper,
and let $\lambda \in\zu$.
Then the $\shawn{\lambda}{f}$ is defined by 
\begin{equation}
\label{e:shawn}
\shawn{\lambda}{f} = \pav{f}{\qq}{1-\lambda}{\lambda}. 
\end{equation}
\end{definition}

\begin{theorem}
\label{t:shawn}
Let $f\colon X\to\RX$ be convex, lower semicontinuous and proper,
and let $\lambda \in\zu$.
Set $\mu=\lambda/(2-\lambda)$. 
Then the following hold.
\begin{enumerate}
\item
\label{t:shawn:i}
$\shawn{\lambda}{f} =
(1-\lambda)\moro{\mu}{f}\circ\Big(\tfrac{2}{2-\lambda}\Id\Big) + \mu\qq$.
\item
\label{t:shawn:ii}
$\big(\shawn{\lambda}{f}\big)^* = \shawn{\lambda}{(f^*)}$. 
\end{enumerate}
\end{theorem}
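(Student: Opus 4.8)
The plan is to dispatch \ref{t:shawn:ii} immediately, since it is a one-line consequence of the conjugation formula, and then obtain \ref{t:shawn:i} by a direct computation along the lines of the proof of Theorem~\ref{t:raf}.

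\textbf{Proof of \ref{t:shawn:ii}.} Since $\qq$ is self-conjugate, i.e., $\qq^*=\qq$, the Fenchel conjugation formula \eqref{e:pavconj} applied to the definition \eqref{e:shawn} gives at once
\[
\big(\shawn{\lambda}{f}\big)^* = \big(\pav{f}{\qq}{1-\lambda}{\lambda}\big)^* = \pav{f^*}{\qq^*}{1-\lambda}{\lambda} = \pav{f^*}{\qq}{1-\lambda}{\lambda} = \shawn{\lambda}{(f^*)}.
\]
Nothing more is required.

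\textbf{Proof of \ref{t:shawn:i}.} Start from \eqref{e:pavdef}. First record the elementary facts $(\alpha\qq)^* = \alpha^{-1}\qq$ for $\alpha>0$ (hence $(\qq+\qq)^* = (2\qq)^* = \tfrac12\qq$), $(\alpha h)^* = \alpha\,h^*\circ(\alpha^{-1}\Id)$ for $\alpha>0$, and $(f+\qq)^{**}=f+\qq$ (valid because $f+\qq$ is convex, lsc, and proper). Substituting these into \eqref{e:pavdef}, using that conjugation converts a sum into an infimal convolution, and making the change of variables $z=y/(1-\lambda)$, one arrives at
\[
\shawn{\lambda}{f}(x) = \inf_{z}\Big((1-\lambda)f(z) + (1-\lambda)\qq(z) + \tfrac{2}{\lambda}\qq\big(x-(1-\lambda)z\big)\Big) - \qq(x), \qquad x\in X.
\]
The only genuine computation left is to complete the square in $z$. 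Expanding the two quadratic terms, the coefficient of $\qq(z)$ equals $(1-\lambda)(2-\lambda)/\lambda = (1-\lambda)/\mu$, and a short calculation (matching the cross term with $x$) gives
\[
(1-\lambda)\qq(z) + \tfrac{2}{\lambda}\qq\big(x-(1-\lambda)z\big) = \tfrac{1-\lambda}{\mu}\,\qq\Big(z-\tfrac{2}{2-\lambda}x\Big) + \tfrac{2}{2-\lambda}\qq(x),
\]
using $\mu=\lambda/(2-\lambda)$. Substituting this back, the purely quadratic part $\tfrac{2}{2-\lambda}\qq(x)-\qq(x)=\mu\qq(x)$ leaves the infimum, and what remains is $(1-\lambda)\inf_z\big(f(z)+\tfrac{1}{\mu}\qq(z-\tfrac{2}{2-\lambda}x)\big) = (1-\lambda)\moro{\mu}{f}\big(\tfrac{2}{2-\lambda}x\big)$ by the definition of the Moreau envelope. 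This is exactly the formula in \ref{t:shawn:i}.

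I expect the square-completion identity to be the main obstacle, though it is entirely routine; everything else is bookkeeping with standard conjugate calculus. The one point worth double-checking is that the leftover quadratic coefficient simplifies to precisely $\mu$ and not some other constant — this is the crux of the matter, and it hinges on the particular choice $\mu=\lambda/(2-\lambda)$.
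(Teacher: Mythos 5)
Your proposal is correct and follows essentially the same route as the paper: part \ref{t:shawn:ii} is the identical one-line application of \eqref{e:pavconj} with $\qq^*=\qq$, and part \ref{t:shawn:i} is the same expansion of \eqref{e:pavdef} into an infimal convolution followed by completing the square (your identity is the paper's, transported by the substitution $z=y/(1-\lambda)$ and scaled by $1-\lambda$). The square-completion constants you flag as the crux all check out, so nothing further is needed.
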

\begin{proof}
\ref{t:shawn:i}: 
Let $x\in X$.
Then, using \eqref{e:shawn}, \eqref{e:pavdef}
and standard convex calculus, 
we obtain
\begin{align}
\big(\shawn{\lambda}{f}\big)(x) 
&= \big((1-\lambda)(f+\qq)^* +\lambda(\qq+\qq)^*\big)^*(x)-\qq(x)\\[+4mm]
&= \big((1-\lambda)(f+\qq)^*
+\tfrac{\lambda}{2}\qq\big)^*(x)-\qq(x)\\[+4mm]
&= \left((1-\lambda)(f+\qq)\Big(\frac{\cdot}{1-\lambda}\Big) \infconv 
\frac{2}{\lambda} \qq\right)(x)-\qq(x)\\[+4mm]
&= \inf_{y}\left((1-\lambda)f\Big(\frac{y}{1-\lambda}\Big)
+ (1-\lambda)\qq\Big(\frac{y}{1-\lambda}\Big)
+\frac{2}{\lambda}\qq(x-y)-\qq(x)\right)\\[+4mm]
&= (1-\lambda)\inf_{y}\left(f\Big(\frac{y}{1-\lambda}\Big)
+ \qq\Big(\frac{y}{1-\lambda}\Big)
+\frac{2}{\lambda(1-\lambda)}\qq(x-y)-\frac{1}{1-\lambda}\qq(x)\right).
\end{align}
Simple algebra shows that for every $y\in X$, 
\begin{equation}
\qq\big(\frac{y}{1-\lambda}\big)
+\frac{2}{\lambda(1-\lambda)}\qq(x-y)-\frac{1}{1-\lambda}\qq(x)
=
\frac{2-\lambda}{\lambda}q\Big(\frac{2x}{2-\lambda}-\frac{y}{1-\lambda}\Big)+
\frac{\lambda}{(1-\lambda)(2-\lambda)}\qq(x). 
\end{equation}
Therefore,
\begin{align}
\big(\shawn{\lambda}{f}\big)(x)  
&= (1-\lambda)\inf_{y}\left(f\Big(\frac{y}{1-\lambda}\Big)
+ \frac{2-\lambda}{\lambda}q\Big(\frac{2x}{2-\lambda}-\frac{y}{1-\lambda}\Big)+
\frac{\lambda}{(1-\lambda)(2-\lambda)}\qq(x) \right)\\[+4mm]
&= (1-\lambda)\inf_{z}\left(f(z)
+ \frac{2-\lambda}{\lambda}q\Big(\frac{2x}{2-\lambda}-z\Big)\right)+
\frac{\lambda}{2-\lambda}\qq(x) \\[+4mm]
&= (1-\lambda)\Big(f\infconv \frac{1}{\mu}\qq\Big)
\Big(\frac{2x}{2-\lambda}\Big)+ \mu\qq(x),
\end{align}
as claimed.

\ref{t:shawn:ii}:
Using \eqref{e:shawn} and \eqref{e:pavconj}, we get
\begin{equation}
\big(\shawn{\lambda}{f}\big)^* =
\big(\pav{f}{\qq}{1-\lambda}{\lambda}\big)^*
= \pav{f^*}{\qq^*}{1-\lambda}{\lambda}
= \pav{f^*}{\qq}{1-\lambda}{\lambda}
=\shawn{\lambda}{(f^*)}.
\end{equation}
The proof is complete. 
\end{proof}

Note that Theorem~\ref{t:shawn}\ref{t:shawn:i} and 
standard properties of the Moreau envelope 
imply that point-wise
\begin{equation}
\lim_{\lambda\to 0^+} \shawn{\lambda}{f} = f
\end{equation} and
that each $\shawn{\lambda}{f}$ is smooth.

\begin{example}
\label{ex:shawnnorm}
Let $\lambda\in\zu$ and set $f =\|\cdot\|$.
Then, for every $x\in X$,
\begin{equation}
\shawn{\lambda}{f}(x) = 
\begin{cases}
\displaystyle\frac{(2-\lambda)\|x\|^2}{2\lambda}, &\text{if
$\displaystyle\|x\|\leq\frac{\lambda}{2}$;}\\[+4mm]
\displaystyle
\frac{\lambda\|x\|^2}{2(2-\lambda)} 
+ \frac{2(1-\lambda)}{2-\lambda}\|x\| -
\frac{\lambda(1-\lambda)}{2(2-\lambda)}, 
&\text{if $\displaystyle\|x\|>\frac{\lambda}{2}$.}
\end{cases}
\end{equation}
\end{example}
\begin{proof}
Combine \eqref{e:moronorm} and Theorem~\ref{t:shawn}\ref{t:shawn:i}.
\end{proof}

\begin{remark}
Let $f=\|\cdot\|$. 
The explicit formulas provided in
Example~\ref{ex:rafnorm} and Example~\ref{ex:shawnnorm}
imply that 
$\goe{\alpha}{f}\neq\shawn{\beta}{f}$, 
for \emph{all} $\alpha$ and $\beta$ in $\zu$.
Thus, the smoothing operator defined by \eqref{e:shawn} is indeed new
and different from Goebel's smoothing operator.
\end{remark}

\begin{remark}
Given a more complicated function $f$, the explicit computation
of the smoothing operators $\goe{\lambda}{f}$ and $\shawn{\lambda}{f}$ may
not be so easy. However, computational convex analysis provides tools
 \cite{Lucet97,LBT} 
to compute the Moreau envelope numerically which 
--- due to the Moreau envelope formulations 
\eqref{e:raf} and Theorem~\ref{t:shawn}\ref{t:shawn:i} --- 
makes it possible to compute the smoothing operators
$\goe{\lambda}{f}$ and $\shawn{\lambda}{f}$ numerically. 

Finally, other approaches to smooth approximation are:
Ghomi's integral convolution method \cite{Ghomi}, 
Seeger's ball rolling technique \cite{Seeger}, 
and Teboulle's entropic proximal mappings \cite{Teboulle}. 
\end{remark}

\section*{Acknowledgment}

Heinz Bauschke was partially supported by the Natural Sciences and
Engineering Research Council of Canada and
by the Canada Research Chair Program.
Sarah Moffat was partially supported by the Natural
Sciences and Engineering Research Council of Canada.
Xianfu Wang was partially supported by the Natural
Sciences and Engineering Research Council of Canada.

\end{document}